\numberwithin{equation}{section}
\newcommand{\qtq}[1]{\quad\text{#1}\quad}
\theoremstyle{definition}
\newtheorem{definition}{Definition}
\theoremstyle{plain}
\newtheorem{theorem}[definition]{Theorem}
\newtheorem*{theorem*}{Theorem}
\newtheorem{lemma}{Lemma}
\newtheorem{proposition}[definition]{Proposition}
\newtheorem*{claim*}{Claim}
\newtheorem{corollary}[definition]{Corollary}
\theoremstyle{remark}
\newtheorem*{remark*}{Remark}
\newcommand{\eps}{\varepsilon}
\DeclareMathOperator{\R}{\mathbb{R}}
\DeclareMathOperator{\C}{\mathbb{C}}
\DeclareMathOperator{\F}{\mathcal{F}}
\newcommand{\lspace}[2]{L^{#1}_{#2}}
\newcommand{\inv}[1]{{#1}^{-1}}
\newcommand{\jbrak}[1]{\langle#1\rangle}
\begin{document}

\title[Modified Scattering]{Modified scattering for a dispersion-managed nonlinear Schr\"odinger equation}
\author[J. Murphy]{Jason Murphy}
\address{Department of Mathematics \& Statistics, Missouri S\&T}
\email{jason.murphy@mst.edu}
\author[T. Van Hoose]{Tim Van Hoose}
\address{Department of Mathematics \& Statistics, Missouri S\&T}
\email{trvkdb@mst.edu}

\begin{abstract}
We prove sharp $L^\infty$ decay and modified scattering for a one-dimensional dispersion-managed cubic nonlinear Schr\"odinger equation with small initial data chosen from a weighted Sobolev space.  Specifically, we work with an averaged version of the dispersion-managed NLS in the strong dispersion management regime.  The proof adapts techniques from \cite{HN, KP}, which established small-data modified scattering for the standard $1d$ cubic NLS.
\end{abstract}
    \maketitle

%%%%%%%%%
\section{Introduction}
We study the long-time behavior of small solutions for a `dispersion-managed' nonlinear Schr\"odinger equation (NLS) in one space dimension.  Such models arise as the envelope equation for electromagnetic wave propagation in fiber-optics communication systems in which the dispersion is varied periodically along the optical fiber (see e.g. \cite{Agr, CHL, Kur}).  This may be modeled via an equation of the form
\begin{equation}\label{dmnls0}
i\partial_t w + d(t) \partial_{xx} w = c|w|^2 w
\end{equation}
for some $1$-periodic function $d(t)$ and some $c\in\R\backslash\{0\}$.  One may then decompose 
\[
d(t)=d_{\text{av}}+ d_0(t),
\]
with $d_0$ 1-periodic and mean zero, and $d_{\text{av}}$ giving the average dispersion. 

In the present paper, we will not work directly with the model \eqref{dmnls0}, but rather with an `averaged' version in the so-called strong dispersion management regime (as introduced in \cite{GT1}).  In particular, we consider the equation
\begin{equation}\label{dmnls}
i\partial_t u + d_{\text{av}}\partial_{xx} u = c\int_0^1 e^{-iD(\tau)\Delta}\bigl\{ |e^{iD(\tau)\Delta}u|^2 e^{iD(\tau)\Delta}u\bigr\}\,d\tau,
\end{equation}
where
\[
D(\tau):= \int_0^\tau d_0(\sigma)d\sigma
\]
(see e.g. \cite[Section~1.2]{CHL} for a derivation of this model).  Here we work with the specific choice 
\begin{equation}
    d(t) = 
\begin{cases}
    d_+, \quad &0 \leq t < t_+, \\
    -d_-, \quad &t_+ < t \leq 1, \\
\end{cases}    
\end{equation}
with $d_\pm$ and $t_+$ chosen so that $d_{\text{av}}\neq 0$.  As a matter of fact, because we consider only small initial conditions, the sign of $d_{\text{av}}$ and $c$ play no essential role, and so we assume without loss of generality that $d_{\text{av}}=c=1$. 

Dispersion-managed nonlinear Schr\"odinger equations have been the subject of a great deal of recent research, due largely to their connection with applications in fiber-optics based communications.  This includes both numerical investigations and rigorous mathematical studies (see e.g. \cite{AntSauSpa, CHL, EHL, GT1, GT2, HL, HL2, PZ, ZGJT}).  Much of the work to date has centered on questions of well-posedness and the existence and properties of soliton solutions.  In this work, we will show that for small initial data chosen from a weighted Sobolev space, the corresponding solutions to \eqref{dmnls} are global in time and decay as $|t|\to\infty$.  Moreover, such solutions exhibit modified scattering, that is, asymptotically linear behavior up to a logarithmic phase correction (as in the case of the standard $1d$ cubic NLS).  In particular, our result demonstrates the absence of small coherent structures for \eqref{dmnls}.

Our main result is the following. 

\begin{theorem}\label{thm1}
Let $u_0\in H^{1,1}$ satisfy $\|u_0\|_{H^{1,1}}=\eps>0$.  If $\eps$ is sufficiently small, then there exists a unique solution $u\in C_t H_x^{1,1}([0,\infty)\times\R)$ to \eqref{dmnls} with $u|_{t=0}=u_0$.  Furthermore, the solution obeys
\begin{equation}\label{u-asymptotic}
\|u(t)\|_{L^\infty} \lesssim \eps(1+|t|)^{-\frac12}
\end{equation}
for all $t\geq 0$, and there exists $W\in L^\infty(\R)$ such that
\[
u(t,x) = (2it)^{-1/2} e^{ix^2/4t}\bigl[ \exp\{-\tfrac{i}2|W(\tfrac{x}{2t})|^2\log t\}  W(\tfrac{x}{2t})\bigr] + \mathcal{O}(t^{-\frac{1}{2}-\frac{1}{20}}) 
\]
in $L^\infty$ as $t\to\infty$. 
\end{theorem}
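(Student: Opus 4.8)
The plan is to follow the now-standard Hayashi–Naumkin/Kato–Pusateri framework for modified scattering of the $1d$ cubic NLS, adapting it to accommodate the averaged dispersion-management nonlinearity. The first step is to pass to the profile $f(t):=e^{-itd_{\text{av}}\partial_{xx}}u(t)=e^{-it\partial_{xx}}u(t)$ (recall $d_{\text{av}}=1$), so that Duhamel's formula reads $f(t)=u_0-ic\int_0^t e^{-is\partial_{xx}}\mathcal N(u(s))\,ds$, where $\mathcal N$ denotes the $\tau$-averaged cubic nonlinearity in \eqref{dmnls}. I would work with the weighted norm controlling both $\|f\|_{H^1}$ (or equivalently $\|u\|_{H^1}$, since the free flow is an isometry) and the scaling/Galilean weight $\|xf\|_{L^2}=\|(x+2it\partial_x)u\|_{L^2}=\|Ju\|_{L^2}$, where $J$ is the Galilean operator. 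The averaging over $\tau$ of the operators $e^{\pm iD(\tau)\partial_{xx}}$ is the one genuinely new feature, so I would first record how $J$ and the operators $\partial_x$, $e^{-it\partial_{xx}}$ interact with the inner propagators $e^{iD(\tau)\partial_{xx}}$: the key point is that $D(\tau)$ is a bounded $\tau$-dependent constant (it is the mean-zero antiderivative of $d_0$), so $e^{iD(\tau)\partial_{xx}}$ is an $L^2$-isometry commuting with $\partial_x$ and with $e^{-it\partial_{xx}}$, and one can track its effect on $J$ via the standard commutator identity.

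Next I would establish a local well-posedness and a bootstrap/a-priori estimate in the space $X$ defined by $\|u\|_X=\sup_{t}\,(1+t)^{-\delta}[\,\|u(t)\|_{H^1}+\|Ju(t)\|_{L^2}\,]$ for a small $\delta>0$, combined with the pointwise decay $\|u(t)\|_{L^\infty}\lesssim \eps(1+t)^{-1/2}$. The decay is extracted from the $H^1$ and $Ju$ control via the standard $L^\infty$ dispersive estimate for the Schrödinger group, namely the factorization $e^{it\partial_{xx}}=M\,D\,\mathcal F\,M$ (with $M$ the multiplication by $e^{ix^2/4t}$ and $D$ the $L^\infty$-preserving dilation), which gives $\|u(t)\|_{L^\infty}\lesssim t^{-1/2}\|\hat f(t)\|_{L^\infty}$ up to controllable errors measured by $\|xf\|_{L^2}$. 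The cubic nonlinearity gains the crucial $t^{-1}$ decay from two of the three factors in $L^\infty$, leaving a logarithmically divergent resonant term; the $\tau$-averaging must be shown not to destroy this structure, which is where I expect to spend the most care.

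The heart of the argument — and the step I expect to be the main obstacle — is the asymptotic analysis on the Fourier side. Writing $\hat f(t,\xi)$ and inserting the factorization into $\mathcal N$, the averaged cubic term becomes, after the change of variables, an oscillatory integral in the three frequencies together with an extra average over $\tau$. For the standard NLS one isolates the stationary/resonant frequency $\xi_1-\xi_2+\xi_3=\xi$ with $\xi_1=\xi_2=\xi_3=\xi$, obtaining the ODE $\partial_t \hat f(t,\xi)\approx -\tfrac{ic}{2t}|\hat f(t,\xi)|^2\hat f(t,\xi)$ plus a remainder that is integrable in time (decaying like $t^{-1-\kappa}$). The new task is to show that the $\tau$-average of the phases $D(\tau)(\xi_1^2-\xi_2^2+\xi_3^2-\xi^2)$ vanishes exactly on the resonant set (so the resonant ODE is unchanged up to a harmless constant) while the off-resonant contributions remain integrable after averaging; here I would exploit that on the stationary set the dispersion-management phase collapses, and bound the remaining pieces by integration by parts in $\xi$ (paying with $\|xf\|_{L^2}$) uniformly in $\tau\in[0,1]$ before integrating in $\tau$.

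Finally, from the ODE $\partial_t\hat f=-\tfrac{i}{2t}|\hat f|^2\hat f+(\text{integrable error})$ one deduces that $|\hat f(t,\xi)|$ converges and that $\hat f(t,\xi)=\exp\{-\tfrac{i}{2}|\hat f(t,\xi)|^2\log t\}$ times a convergent profile; setting $W$ to be the limit of $\hat f$ (suitably normalized) and reinserting the factorization $e^{it\partial_{xx}}=MD\mathcal FM$ yields the stated asymptotic formula with the $e^{ix^2/4t}$ phase, the logarithmic phase correction $\exp\{-\tfrac i2|W(\tfrac{x}{2t})|^2\log t\}$, and an error of size $t^{-1/2-\kappa}$ with $\kappa=\tfrac1{20}$ after optimizing $\delta$. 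The closing a-priori bound $\|u\|_X\lesssim\eps$ follows by the bootstrap, establishing global existence, the decay \eqref{u-asymptotic}, and modified scattering simultaneously.
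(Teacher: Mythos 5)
Your proposal follows essentially the same route as the paper: the profile $f=e^{-it\Delta}u$, a bootstrap coupling an energy norm ($H^1$ plus $\|Ju\|_{L^2}$, with a small power-of-$t$ loss) to a dispersive norm measured through $\hat f$, the factorization \eqref{MDFM} for $L^\infty$ decay, isolation of the resonant contribution on the Fourier side, an integrating factor to remove it, and reconstruction of the asymptotics. Your expectation about the resonance is exactly what happens: on the stationary set the dispersion-management phase collapses, and the resonant coefficient becomes $\int_0^1\tfrac{d\tau}{2(t+D(\tau))}$, which differs from $\tfrac1{2t}$ by an integrable $\mathcal{O}(t^{-2})$ error that is absorbed into a convergent phase correction, so the modified-scattering profile is the same as for the standard cubic NLS. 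Two points where the paper's execution differs from (or sharpens) your plan are worth noting. First, for the non-resonant remainder the paper does not integrate by parts in the frequency variables; following Kato--Pusateri, it applies Plancherel in $(\eta,\sigma)$ against the explicit formula $\F_{\eta,\sigma}[e^{2i(s+D(\tau))\eta\sigma}]=\tfrac{1}{2(s+D(\tau))}e^{-i\eta\sigma/(2(s+D(\tau)))}$ and then uses the bound \eqref{simple-bound}, paying with $\|\jbrak{x}f\|_{L^2}$; your integration-by-parts variant should also work but must contend with the singularity the phase derivative produces at zero frequency, which the paper's trick avoids entirely. Second---and this is the paper's main new technical point---in the energy estimate the $L^\infty$ factor produced by the chain rule \eqref{lem1} is $\|e^{iD(\tau)\Delta}u(s)\|_{L^\infty}$, not $\|u(s)\|_{L^\infty}$, and since $e^{iD(\tau)\Delta}$ does not preserve $L^\infty$ the bootstrap decay of $u$ itself cannot be invoked here; instead one writes $e^{iD(\tau)\Delta}u(s)=e^{i(s+D(\tau))\Delta}f(s)$ and reruns the factorization estimate at the shifted time $s+D(\tau)$ (using boundedness, and in the paper's parameter choice nonnegativity, of $D(\tau)$) to obtain the decay $\lesssim |s|^{-1/2}\|u(s)\|_X$ as in \eqref{new-estimate}. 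You flag this $\tau$-interaction as the place needing the most care and you have the required commutation identities in hand, so your plan closes once this step is made explicit.
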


Theorem~\ref{thm1} fits in the general context of modified scattering for long-range nonlinear Schr\"odinger equations.  In particular, many previous works have considered the standard $1d$ cubic NLS
\begin{equation}\label{cubic-NLS}
i\partial_t u + \partial_{xx}u = \pm |u|^2 u,
\end{equation}
for which one also obtains sharp $L^\infty$ and modified scattering for small data in $H^{1,1}$.  In fact, in the defocusing case, one can capitalize on the completely integrable structure of \eqref{cubic-NLS} to obtain this result without any size restriction on the initial data \cite{DeiftZhou}.  Several different approaches have been utilized in order to establish small-data modified scattering for \eqref{cubic-NLS} (see e.g. \cite{HN, KP, LS, IT}, as well as \cite{Murphy} for a review). Essentially, each of these approaches are based off of a bootstrap argument involving some `dispersive' type norm and some `energy' type norm, using an ODE argument to obtain estimates for the dispersive part and a chain-rule type estimate and Gr\"onwall to control the energy part.  We follow the same general strategy, adapting techniques particularly from \cite{KP} and \cite{HN}.  In particular, we use the Fourier representation and a `space-time non-resonance' type approach as in \cite{KP} to control the dispersive norm, while the energy-type estimate relies on the chain-rule type identity satisfied by the Galilean operator $J(t)=x+2it\partial_x$.  The estimate for the dispersive norm follows largely as in \cite{KP}, while the estimate of energy norm requires some modifications.  In particular, while the energy estimate for the standard NLS relies directly on the $L^\infty$-decay for solutions, we must instead rely on the factorization of the free propagator (see \eqref{MDFM}) in order to exhibit suitable decay in the nonlinearity.  For the details, see \eqref{energy-pause} and \eqref{new-estimate} below.  

The problem for \eqref{dmnls} (as opposed to the original model \eqref{dmnls0}) is simplified by the fact that the underlying linear part of the equation is still given by the standard Schr\"odinger equation.  In the setting of \eqref{dmnls0}, the situation is more complicated due to the time-dependence in the linear part of the equation, which affects the underlying linear dispersion (see e.g. \cite{AntSauSpa}).  We plan to study the model \eqref{dmnls0} in a future work. 

The rest of the paper is organized as follows:  In Section~\ref{S:notation}, we collect some notation, a few useful identities and inequalities, and discuss the $H^1$ and $H^{1,1}$ well-posedness for \eqref{dmnls}.  In Section~\ref{S:proof}, we prove the main result, Theorem~\ref{thm1}.  In particular, in Section~\ref{S:decay}, we establish global existence and $L^\infty$ decay, while in Section~\ref{S:scattering} we establish the long-time asymptotic behavior of solutions.

\section{Preliminaries}\label{S:notation}
In this section, we introduce some notation that will be used throughout the rest of the paper. First, we write $A \lesssim B$ or $B \gtrsim A$ to denote the inequality $A \leq CB$ for some $C > 0$. If $A \lesssim B$ and $B \lesssim A$, we write $A \sim B$.  We also utilize the standard `big oh' notation $\mathcal{O}$. Finally, we use the Japanese bracket notation $\jbrak{x} := (1 + |x|^2)^{\frac{1}{2}}$ 

Our notation for the Fourier transform is
\[
  \F[f](\xi) = \hat{f}(\xi) := (2\pi)^{-\frac{1}{2}} \int_{\R} e^{i x  \xi} f(x) \,dx
\]
We define the spaces $H^1$ and $H^{1,1}$ via the norms
\[
\|u\|_{H^1} = \|\langle \partial_x\rangle u\|_{L^2} \qtq{and}\|u\|_{H^{1,1}} = \|u\|_{H^1}+\|xu\|_{L^2},
\]
where $\langle \partial_x\rangle = \F^{-1}\langle \xi\rangle \F$. 

Solutions to the linear Schr\"odinger equation with $u|_{t=0}=u_0$ are given by 
\[
u(t, x) = e^{it\Delta}u_0(x), \qtq{where} \F^{-1}e^{-it\xi^2}\F.
\]
The free propagator $e^{it\Delta}$ admits the integral kernel
\[
e^{it\Delta}(x,y) := (4\pi i t)^{-\frac{1}{2}} e^{i\frac{(x-y)^2}{4t}},
\]
which implies the factorization identity
\begin{equation}\label{MDFM}
e^{it\Delta} = \mathcal{M}(t)\mathcal{D}(t)\F \mathcal{M}(t),
\end{equation}
where 
\[
    [\mathcal{M}(t)f](x) = e^{ix^2/4t}f(x)\qtq{and}  [\mathcal{D}(t)f](x) = (2it)^{-\frac{1}{2}}f\bigl(\tfrac{x}{2t}\bigr)
\]

We will make use of the Galilean operator $J(t) = x + 2it\partial_x$. On the one hand, one can directly compute and show that
\[
J(t) = \mathcal{M}(t)[2it\partial_x] \mathcal{M}(-t).    
\]
On the other hand, an ODE argument leads to the identity 
\begin{equation}\label{J-id}
J(t) = e^{it\Delta}xe^{-it\Delta}.    
\end{equation}

We will make use of the following chain rule identity for $J(t)$, which follows from a direct computation: for any $p>0$,
\begin{equation}\label{lem1} 
    J(t)[|z|^p z] = \tfrac{p+2}{2}|z|^p [J(t)z] - \tfrac{p}{2}|z|^{p-2}z^2 [\overline{J(t)z}].   
\end{equation}

We will also utilize the following elementary estimate several times below: for $0\leq \alpha\leq 1$, 
\begin{equation}\label{simple-bound}
|e^{ix} - 1| \lesssim |x|^\alpha.
\end{equation}

\subsection{Well-posedness}\label{S:LWP} In this section we discuss the $H^1$ and $H^{1,1}$ well-posedness for \eqref{dmnls} (see e.g. \cite{AntSauSpa, CHL} for other well-posedness results for dispersion-managed NLS).  As much of what follows is standard, we focus only on the main points and the new estimates needed to treat the specific model \eqref{dmnls}; we refer the reader to \cite{Caz} for a general introduction to well-posedness for nonlinear Schr\"odinger equations.

  We construct solutions to \eqref{dmnls} as solutions to the following Duhamel formula:
\[
u(t) = e^{it\Delta}u_0 -i\int_1^t \int_0^1 e^{i(t-s)\Delta}\bigl[e^{-iD(\tau)\Delta}\{|e^{iD(\tau)\Delta}u(s)|^2e^{iD(\tau)\Delta}u(s)\}\bigr]\,d\tau\,ds. 
\]
We apply the standard contraction mapping argument, with the key nonlinear estimate given as follows:  By the chain rule, the Sobolev embedding $H^1(\R)\hookrightarrow L^\infty(\R)$, and the fact that $e^{i\cdot\Delta}$ is unitary on $H^1$, we have
\begin{align*}
\|&\int_0^1 \langle\partial_x\rangle\bigl[e^{-iD(\tau)\Delta}\{|e^{iD(\tau)\Delta}u|^2e^{iD(\tau)\Delta}u\}\bigr]\,d\tau \|_{L_t^1L_x^2([0,T]\times\R)} \\
& \lesssim \sup_{\tau\in[0,1]}T \| \langle \partial_x\rangle\bigl[ e^{-iD(\tau)\Delta}\{|e^{iD(\tau)\Delta}u|^2e^{iD(\tau)\Delta}u\}\bigr]\|_{L_t^\infty L_x^2} \\
& \lesssim \sup_{\tau\in[0,1]}T\|e^{iD(\tau)\Delta}u\|_{L_{t,x}^\infty}^2 \|\langle \partial_x \rangle u \|_{L_t^\infty L_x^2} \lesssim T \|\langle\partial_x\rangle u\|_{L_t^\infty L_x^2}^3.
\end{align*}
This allows us to establish local existence for $u_0\in H^1$ for times $T\sim \|u_0\|_{H^1}^{-2}$.  Similarly, by utilizing the chain rule for $J(t)$ (see \eqref{lem1}), we may establish local existence for $u_0\in H^{1,1}$, again with $T\sim \|u_0\|_{H^1}^{-2}$, albeit only with the crude bound
\[
\|x u(t)\|_{L^2}\lesssim \|J(t)u(t)\|_{L^2}+t \|\partial_x u(t)\|_{L^2} \lesssim (1+t)\|u_0\|_{H^{1,1}}.
\]

This $H^1$-subcritical well-posedness also includes the usual blowup alternative, that is, either $u$ is forward-global or there exists $T_*<\infty$ such that
\[
\lim_{t\to T_*}\|u(t)\|_{H^1}=\infty. 
\]
We will eventually obtain explicit bounds on the growth of the $H^1$-norm of solutions, which in particular imply that the solutions may be extended to be forward-global in time. 

%%%%%%%%
\section{Proof of Main Result}\label{S:proof}
We let $u$ be a solution to \eqref{dmnls} with $\|u_0\|_{H^{1,1}}=\eps$.  By the local theory and Sobolev embedding, we may assume that
\begin{equation}\label{small-data-t1}
\|u(t)\|_{H^{1,1}}\leq 2\eps. 
\end{equation}
for $t\in[0,1]$, and that the solution exists on some maximal interval $(0,T_*)$ with $T_*>1$.

\subsection{Global existence and decay}\label{S:decay}  The first part of the proof of Theorem~\ref{thm1} is based on a bootstrap argument for times $t\geq 1$ involving the following `dispersive' and `energy' norms:
 \begin{equation}\label{XDXE} 
    \begin{aligned}
    \|u(t)\|_{X_D} &:= \|\hat{f}(t)\|_{L^\infty},\qtq{where} f(t)=e^{-it\Delta}u(t), \\
    \|u(t)\|_{X_E} &:= t^{-\frac{1}{20}}\bigl\{\|\jbrak{\partial_x} u(t)\|_{L^2} + \|J(t)u(t)\|_{L^2}\bigr\}.
    \end{aligned}
    \end{equation}
Note that by \eqref{J-id}, we may also write
\[
\|J(t)u(t)\|_{L^2}=\|xf(t)\|_{L^2}.
\]

We then define
\[
\|u(t)\|_{X} = \sup_{s\in[1,T]}\{\|u(s)\|_{X_D} + \|u(s)\|_{X_E}\}.
\]

Control over these norms will imply the desired $L^\infty$ decay.
\begin{lemma}\label{deduce-pointwise} For any $t\geq 1$, 
\[
\|u(t)\|_{L^\infty} \lesssim t^{-\frac12}\{\|u(t)\|_{X_D}+\|u(t)\|_{X_E}\}.
\]
\end{lemma}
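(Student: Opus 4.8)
The plan is to exploit the factorization identity \eqref{MDFM} to write the solution in a form where the $L^\infty$ norm is controlled directly by the sup norm of $\hat f$ plus an error governed by the energy norm. Since $u(t)=e^{it\Delta}f(t)$ by definition of $f$, applying \eqref{MDFM} gives
\[
u(t) = \mathcal{M}(t)\mathcal{D}(t)\F\mathcal{M}(t)f(t).
\]
Taking $L^\infty$ norms and using that $\mathcal{M}(t)$ is a unimodular multiplication operator (so preserves $L^\infty$) while $\mathcal{D}(t)$ contributes the factor $(2t)^{-1/2}$, I would obtain
\[
\|u(t)\|_{L^\infty} \lesssim t^{-\frac12}\,\|\F\mathcal{M}(t)f(t)\|_{L^\infty}.
\]
The point is then to compare $\F\mathcal{M}(t)f$ with $\hat f = \F f$: the difference $\F[\mathcal{M}(t)f - f] = \F[(e^{ix^2/4t}-1)f]$ is the error term, and $\|\hat f\|_{L^\infty}$ is exactly $\|u(t)\|_{X_D}$.

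First I would isolate the main term by writing $\F\mathcal{M}(t)f = \hat f + \F[(\mathcal{M}(t)-1)f]$, so that
\[
\|u(t)\|_{L^\infty} \lesssim t^{-\frac12}\bigl\{\|\hat f\|_{L^\infty} + \|\F[(\mathcal{M}(t)-1)f]\|_{L^\infty}\bigr\}.
\]
For the error term, I would bound the $L^\infty$ of a Fourier transform by the $L^1$ norm of the function (via Hausdorff--Young or directly from the definition), giving $\|\F[(\mathcal{M}(t)-1)f]\|_{L^\infty}\lesssim \|(e^{ix^2/4t}-1)f\|_{L^1}$. Then the elementary estimate \eqref{simple-bound} with a suitable exponent $\alpha\in(0,1]$ yields $|e^{ix^2/4t}-1|\lesssim (x^2/t)^{\alpha}$, so the integrand is controlled by $t^{-\alpha}|x|^{2\alpha}|f|$. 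Choosing $\alpha$ close to but below $1/2$ and applying Cauchy--Schwarz, I can split $|x|^{2\alpha}|f|$ as $(\jbrak{x}^{2\alpha})\cdot|f|$ and bound it by $\|\jbrak{x}^{2\alpha}f\|_{L^2}$ up to the weight needed to make $\jbrak{x}^{-\beta}$ lie in $L^2$; since $\|xf\|_{L^2}=\|J(t)u\|_{L^2}$, this connects directly to the energy norm.

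The main obstacle is tuning the exponent $\alpha$ so that the error is genuinely subordinate to the stated bound. The energy norm carries the growth factor $t^{1/20}$, so I must ensure that the extra decay $t^{-\alpha}$ from the phase beats this growth while keeping the weight $|x|^{2\alpha}$ integrable against $f$ using only the available $L^2$-based control of $xf$. Concretely, with $2\alpha\le 1$ I can estimate $\||x|^{2\alpha}f\|_{L^1}$ by interpolating between $\|f\|_{L^2}$ and $\|xf\|_{L^2}$ after inserting a fixed integrable weight; the net power of $t$ emerging from $t^{-\alpha}$ against $t^{1/20}$ must come out at most $t^0$ relative to the main term's $t^{-1/2}$ prefactor, i.e. the error must be $\lesssim t^{-1/2}\|u(t)\|_{X_E}$. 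Verifying that the bookkeeping closes with room to spare—so that the constant $\tfrac{1}{20}$ in $X_E$ is comfortably absorbed—is the delicate part; everything else is a routine application of \eqref{MDFM}, Hausdorff--Young, and \eqref{simple-bound}.
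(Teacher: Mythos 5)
Your overall strategy is exactly the paper's proof: factor $u(t)=\mathcal{M}(t)\mathcal{D}(t)\F\mathcal{M}(t)f(t)$ via \eqref{MDFM}, split $\F\mathcal{M}(t)f=\hat f+\F[(\mathcal{M}(t)-1)f]$, bound the error term in $L^\infty$ by the $L^1$ norm via Hausdorff--Young, apply \eqref{simple-bound} to the phase, and close with Cauchy--Schwarz against $\|\jbrak{x}f\|_{L^2}$. However, your stated parameter choice, ``$\alpha$ close to but below $1/2$,'' breaks the Cauchy--Schwarz step. After \eqref{simple-bound} the error is $t^{-\alpha}\||x|^{2\alpha}f\|_{L^1}$, and Cauchy--Schwarz gives
\[
\||x|^{2\alpha}f\|_{L^1}\le \bigl\||x|^{2\alpha}\jbrak{x}^{-1}\bigr\|_{L^2}\,\|\jbrak{x}f\|_{L^2},
\]
which requires $|x|^{2\alpha}\jbrak{x}^{-1}\in L^2$, i.e. $4\alpha-2<-1$, i.e. $\alpha<\tfrac14$. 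For $\alpha$ near $\tfrac12$ that weight is roughly constant at infinity and fails to be square integrable; equivalently, your variant with a weight $\jbrak{x}^{-\beta}$, $\beta>\tfrac12$, would require control of $\|\jbrak{x}^{2\alpha+\beta}f\|_{L^2}$ with $2\alpha+\beta$ near $\tfrac32$, but the $H^{1,1}$ framework and the $X_E$ norm only control one power of $x$ on $f$, namely $\|xf\|_{L^2}=\|J(t)u(t)\|_{L^2}$.

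The constraint in the other direction is the one you correctly identified: to absorb the $t^{1/20}$ growth hidden in the definition of $X_E$ one needs $t^{-\alpha}\le t^{-1/20}$ for $t\ge 1$, i.e. $\alpha\ge\tfrac{1}{20}$. So the admissible window is $\tfrac{1}{20}\le\alpha<\tfrac14$; the paper takes exactly $\alpha=\tfrac{1}{20}$, producing the weight $|x|^{1/10}$ and the factor $t^{-1/20}$ that matches the $X_E$ norm. With any choice in this window your argument closes and coincides with the paper's proof; with $\alpha$ near $\tfrac12$, as written, it does not.
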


\begin{proof} We write $f(t)=e^{-it\Delta}u(t)$ as above.  By \eqref{MDFM}, Hausdorff--Young, and Cauchy--Schwarz, we have
\begin{align*}
\|u(t)\|_{L^\infty} &= \|M(t)D(t)\F M(t)f(t)\|_{L^\infty} \\
& \lesssim t^{-\frac12}\{\|\hat f(t)\|_{L^\infty}+\|\F[M(t)-1]f(t)\|_{L^\infty}\} \\
& \lesssim t^{-\frac12}\{\|u(t)\|_{X_D} + t^{-\frac{1}{20}}\| |x|^{\frac{1}{10}} f(t)\|_{L^1}\} \\
& \lesssim t^{-\frac12}\{\|u(t)\|_{X_D}+ t^{-\frac{1}{20}} \|\langle x\rangle f(t)\|_{L^2}\}\\
 &\lesssim t^{-\frac12}\{\|u(t)\|_{X_D}+\|u(t)\|_{X_E}\}. 
\end{align*}
\end{proof}

The first part of Theorem~\ref{thm1} will follow from the following bootstrap estimate.

\begin{proposition}\label{P:bootstrap} Let $u:[1,T]\times\R\to\C$ be a solution to \eqref{dmnls} satisfying \eqref{small-data-t1}.  Then there exists $C>0$ (independent of $T$) so that  
    \begin{align*}
        \|u(t)\|_{X} \leq 8\eps + C\|u(t)\|^3_{X}        
    \end{align*} 
for all $t\in[1,T]$. 
\end{proposition}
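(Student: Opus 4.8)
The plan is to control the two norms $\|u(t)\|_{X_D}$ and $\|u(t)\|_{X_E}$ separately, each via the Duhamel formula, and then sum. For both estimates I would pass to the profile $f(t)=e^{-it\Delta}u(t)$, whose Duhamel evolution reads
\[
\partial_t f(t) = -i\,e^{-it\Delta}\!\int_0^1 e^{-iD(\tau)\Delta}\bigl\{|e^{iD(\tau)\Delta}u|^2 e^{iD(\tau)\Delta}u\bigr\}\,d\tau,
\]
so that $\hat f(t,\xi)$ satisfies an ODE whose right-hand side, after inserting $e^{it\Delta}=\mathcal{M}(t)\mathcal{D}(t)\F\mathcal{M}(t)$, carries an oscillatory factor in $\xi$ and the internal variable. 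Integrating the bound $\|u(s)\|_{X_D}+\|u(s)\|_{X_E}\lesssim\|u(s)\|_X$ back through the formula should reduce everything to a factor of $\|u\|_X^3$ plus the data term $e^{it\Delta}u_0$, which contributes $\|\hat u_0\|_{L^\infty}+\|u_0\|_{H^{1,1}}\lesssim\eps$.

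\textbf{For the dispersive norm} I would follow \cite{KP}: write $\partial_t\hat f(t,\xi)$ using the factorization \eqref{MDFM}, extracting the stationary-phase main term (a cubic expression in $\hat f$ at frequency $\xi$, of size $t^{-1}|\hat f(\xi)|^2\hat f(\xi)$) and a remainder. The main term is purely imaginary after taking the modulus, so it drops out of $\partial_t|\hat f(t,\xi)|$; this is the space-time non-resonance mechanism. The remainder is estimated by the presence of the extra $\tau$-average and the factor $\mathcal{M}(t)-1$, which one trades for a power of $|x|$ via \eqref{simple-bound} and hence for $\|\langle x\rangle f\|_{L^2}=\|\langle x\rangle\hat f\|$-type control, i.e.\ the energy norm. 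Integrating in $t$ gives $\|u(t)\|_{X_D}\lesssim\eps+\|u\|_X^3$ after absorbing the $t^{-1-\delta}$ decay. The only wrinkle relative to \cite{KP} is the $\int_0^1\,d\tau$ and the conjugations $e^{\pm iD(\tau)\Delta}$; since $D(\tau)$ is bounded these just contribute bounded $\tau$-dependent phases that one controls uniformly and integrates out.

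\textbf{For the energy norm} I would bound $\partial_t$ of $\|\langle\partial_x\rangle u\|_{L^2}$ and $\|J(t)u\|_{L^2}$. Applying $\langle\partial_x\rangle$ or $J(t)$ to the nonlinearity and using the chain-rule identity \eqref{lem1} (which commutes $J(t)$ through $|z|^2z$ at the cost of factors of $|z|^2$ and a complex conjugation) reduces matters to estimating $\|e^{iD(\tau)\Delta}u\|_{L^\infty}^2$ times $\|J(t)u\|_{L^2}$ or $\|\langle\partial_x\rangle u\|_{L^2}$. Here is the key point flagged in the introduction: for the standard NLS one simply inserts the $L^\infty$ decay $\|u\|_{L^\infty}\lesssim t^{-1/2}\|u\|_X$, but the conjugation by $e^{iD(\tau)\Delta}$ obstructs this directly, so instead I would use the factorization \eqref{MDFM} applied to $e^{iD(\tau)\Delta}u=e^{i(t+D(\tau))\Delta}f$ to extract the same $t^{-1/2}$ decay uniformly in $\tau$ (since $D(\tau)$ is bounded, $t+D(\tau)\sim t$), at the cost of the energy norm. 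This yields $\frac{d}{dt}\{\cdots\}\lesssim t^{-1}\|u\|_X^2\cdot\|u\|_X$-type quantities, and then a Grönwall/direct-integration argument against the $t^{-1/20}$ weight in the definition of $X_E$ closes the bound as $\|u(t)\|_{X_E}\lesssim\eps+\|u\|_X^3$.

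\textbf{The main obstacle} I expect is precisely this energy estimate: making the $t^{-1/2}$ decay of $\|e^{iD(\tau)\Delta}u\|_{L^\infty}$ rigorous and uniform in $\tau\in[0,1]$ through the factorization, while tracking that the resulting time integral of $t^{-1}$ against the $X_E$ weight is integrable (the $t^{-1/20}$ prefactor is chosen exactly so that a logarithmic loss is absorbed). The dispersive estimate is essentially \cite{KP} with harmless bounded phases, so the genuinely new work — and the reason for the weighted $X_E$ norm and the references to \eqref{energy-pause} and \eqref{new-estimate} — lives in controlling the nonlinearity's energy growth without direct recourse to the pointwise decay of $u$ itself.
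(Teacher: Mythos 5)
Your proposal follows essentially the same route as the paper: the dispersive bound via the Kato--Pusateri resonance extraction (the real-coefficient cubic term contributing only a phase rotation, the remainder controlled through \eqref{simple-bound} and weighted norms at the cost of $t^{-1-\frac{1}{20}}\|u\|_X^3$), and the energy bound via the chain rule \eqref{lem1} for $J$ together with the factorization \eqref{MDFM} applied to $e^{i(s+D(\tau))\Delta}f(s)$ to recover $s^{-1/2}$ decay uniformly in $\tau$, with the $t^{-\frac{1}{20}}$ weight absorbing the resulting growth. The only cosmetic difference is that the paper removes the resonant term with an integrating factor $e^{i\Theta(t)}$ rather than by noting it drops out of $\partial_t|\hat f|$, which is equivalent for this estimate.
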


We split this proposition into two lemmas. We begin by estimating the dispersive norm. 

\begin{lemma}[Dispersive bound] \label{disp} For any $t\geq 1$, 
    \begin{align}
        \|u(t)\|_{X_D} \leq 2\eps + C\int_1^t s^{-1-\frac{1}{20}}\|u(s)\|_X^3\,ds.
    \end{align}
\end{lemma}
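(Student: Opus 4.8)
The plan is to derive a pointwise-in-frequency differential equation for the profile $\hat f(t,\xi)$ and to show that, apart from a purely imaginary resonant term that cannot affect the modulus $|\hat f(t,\xi)|$, the right-hand side is integrable in time with the stated rate. Writing $f=e^{-it\Delta}u$ and using \eqref{dmnls} (with $d_{\text{av}}=c=1$), a direct computation gives $\partial_t f=-ie^{-it\Delta}N(u)$, where $N(u)$ denotes the averaged cubic nonlinearity. Substituting $\hat u(t,\eta)=e^{-it\eta^2}\hat f(t,\eta)$ into the Fourier transform of $N(u)$ and carrying out the convolution, all four quadratic phases (three from the propagators and one from the outer $e^{-it\Delta}$) combine into the single resonance function $\Phi=\xi^2-\eta_1^2+\eta_2^2-\eta_3^2=2(\eta_1-\eta_2)(\eta_3-\eta_2)$, with $\eta_3=\xi-\eta_1+\eta_2$. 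The key structural point is that the strong-dispersion-management phases $e^{iD(\tau)(\cdots)}$ combine in exactly the same way, so that
\[
\partial_t\hat f(t,\xi)=-i(2\pi)^{-1}\int_0^1\!\int_{\R^2} e^{i(t+D(\tau))\Phi}\,\hat f(t,\eta_1)\,\overline{\hat f(t,\eta_2)}\,\hat f(t,\eta_3)\,d\eta_1\,d\eta_2\,d\tau.
\]
Since $D(\tau)=\int_0^\tau d_0$ is bounded uniformly in $\tau\in[0,1]$, we have $t+D(\tau)\sim t$ for $t\geq 1$ (adjusting constants near $t=1$), so the averaging only shifts the time in the oscillatory phase and does not alter the resonant geometry. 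This is precisely the simplification afforded by working with the averaged model \eqref{dmnls} rather than \eqref{dmnls0}.

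Next I would isolate the resonant contribution. The phase $\Phi$ vanishes on $\{\eta_1=\eta_2\}\cup\{\eta_3=\eta_2\}$, and stationary phase in $(\eta_1,\eta_2)$ localizes the non-oscillatory part of the integral to the diagonal $\eta_1=\eta_2=\eta_3=\xi$, producing a main term of the form $-\tfrac{i}{2t}|\hat f(t,\xi)|^2\hat f(t,\xi)$ (the $\int_0^1 d\tau$ contributes only the harmless factor $1$ to leading order, since $D(\tau)/t\to0$). Writing $\partial_t\hat f=-\tfrac{i}{2t}|\hat f|^2\hat f+E(t,\xi)$, the main term is an imaginary multiple of $\hat f$, so
\[
\tfrac{d}{dt}|\hat f(t,\xi)|=\mathrm{Re}\Bigl(\tfrac{\overline{\hat f}}{|\hat f|}\,\partial_t\hat f\Bigr)=\mathrm{Re}\Bigl(\tfrac{\overline{\hat f}}{|\hat f|}\,E(t,\xi)\Bigr),
\]
and hence $|\hat f(t,\xi)|\leq|\hat f(1,\xi)|+\int_1^t|E(s,\xi)|\,ds$. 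Taking the supremum in $\xi$ and using \eqref{small-data-t1} to bound $\|\hat f(1)\|_{L^\infty}\leq 2\eps$, the lemma reduces to the error estimate $\|E(s)\|_{L^\infty_\xi}\lesssim s^{-1-\frac1{20}}\|u(s)\|_X^3$.

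To establish this I would follow the space-time non-resonance argument of \cite{KP}: split the $(\eta_1,\eta_2)$-integral into a region near the resonant set $\{\Phi\approx0\}$ and its complement. On the non-resonant region one integrates by parts in frequency using $e^{i(t+D(\tau))\Phi}=\tfrac{1}{i(t+D(\tau))|\grad\Phi|^2}\grad\Phi\cdot\grad\, e^{i(t+D(\tau))\Phi}$, gaining a factor $t^{-1}$; the derivative falling on the profiles is controlled by $\|\partial_\xi\hat f\|_{L^2}=\|xf\|_{L^2}=\|J(t)u\|_{L^2}\lesssim t^{\frac1{20}}\|u\|_X$ (via \eqref{J-id}), while the remaining factors are taken in $L^\infty_\xi\lesssim\|u\|_X$ and $L^2_\xi\lesssim t^{\frac1{20}}\|u\|_X$ and combined by Cauchy--Schwarz. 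On the near-resonant region, where the phase need not oscillate, one instead uses the smallness of its measure together with $\|\hat f\|_{L^\infty}\lesssim\|u\|_X$. Balancing the widths of the two regions against the $t^{\frac1{20}}$ growth of $\|J(t)u\|_{L^2}$ yields a net bound of $s^{-1-\frac1{20}}\|u\|_X^3$, uniformly in $\tau\in[0,1]$ (the $\tau$-integral costing only a factor $1$); integrating in $s$ then gives the claim.

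I expect the error estimate in the last step to be the main obstacle: one must carefully decompose near and far from the two lines comprising the resonant set, track the singularity of $|\grad\Phi|^{-1}$ there, and verify that the surplus decay beyond $t^{-1}$ survives the $t^{\frac1{20}}$ loss coming from the weighted energy norm $\|J(t)u\|_{L^2}$. The averaging in $\tau$ introduces no genuine difficulty beyond checking that all constants are uniform in $\tau\in[0,1]$, since $\Phi$ is independent of $\tau$ and $t+D(\tau)\sim t$.
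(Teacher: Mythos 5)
Your setup is fine: the frequency-space ODE with phase $e^{i(t+D(\tau))\Phi}$ matches the paper's, and your modulus trick (the purely imaginary resonant term is invisible to $\tfrac{d}{dt}|\hat f|$) is equivalent to the paper's integrating factor $g=e^{i\Theta(t)}\hat f$, since $\Theta$ is real. The gap is in the error estimate, and it is structural rather than technical. You define $E$ by subtracting $-\tfrac{i}{2t}|\hat f|^2\hat f$ from the full oscillatory integral, and then propose to bound that integral by splitting into a near-resonant region (controlled by the smallness of its measure) and a non-resonant region (controlled by integration by parts). But the resonant term has size exactly $t^{-1}$, and it is nothing other than the leading contribution of the near-resonant region. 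If your two bounds really gave $|I_{\mathrm{near}}|+|I_{\mathrm{far}}|\lesssim t^{-1-\frac1{20}}\|u\|_X^3$, then $E$ would still contain the un-cancelled term $\tfrac{i}{2t}|\hat f|^2\hat f$, which is $O(t^{-1})$ and not $O(t^{-1-\frac1{20}})$; so the claimed bound on $\|E\|_{L^\infty_\xi}$ cannot follow from estimates of this shape (taken literally, they would prove the absence of any logarithmic phase correction, contradicting the theorem). To make your route work, the near-resonant analysis must \emph{produce} the resonant term rather than be small: e.g.\ write $G(\eta_1,\eta_2)=G(\xi,\xi)+[G-G(\xi,\xi)]$ near the crossing point, evaluate the resulting Fresnel integral exactly to obtain $\tfrac{c}{t}G(\xi,\xi)$, and control the difference via the H\"older-$\tfrac12$ continuity $|\hat f(\eta)-\hat f(\xi)|\leq\|\partial_\xi\hat f\|_{L^2}|\eta-\xi|^{1/2}$, where $\|\partial_\xi\hat f\|_{L^2}=\|J(t)u\|_{L^2}\lesssim t^{\frac1{20}}\|u\|_X$ by \eqref{J-id}; you would also need a separate argument along the two full resonant lines $\{\eta_1=\eta_2\}$ and $\{\eta_1=\xi\}$, which are non-compact and on which $G$ does not vanish. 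None of this is in your outline, and the appeal to ``stationary phase'' for the main term is only heuristic at the available regularity (one derivative of $\hat f$ in $L^2$).

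The paper --- and in fact Kato--Pusateri, whose method is not a cutoff/integration-by-parts argument, contrary to your attribution --- sidesteps all of this. After the change of variables making the phase $2(s+D(\tau))\eta\sigma$, one applies Plancherel in $(\eta,\sigma)$ together with the exact identity $\F_{\eta,\sigma}[e^{2i(s+D(\tau))\eta\sigma}]=\tfrac{1}{2(s+D(\tau))}e^{-i\eta\sigma/(2(s+D(\tau)))}$. Writing $e^{-i\eta\sigma/(2(s+D(\tau)))}=1+[e^{-i\eta\sigma/(2(s+D(\tau)))}-1]$, the contribution of $1$ is exactly the resonant term, so the cancellation is automatic; the remainder carries the factor $e^{-i\eta\sigma/(2(s+D(\tau)))}-1$, which \eqref{simple-bound} bounds by $|s+D(\tau)|^{-\frac15}|\eta|^{\frac15}|\sigma|^{\frac15}$, and then the kernel bound $|\F^{-1}_{\eta,\sigma}[G]|\lesssim\int|f(z-\eta)|\,|f(z)|\,|f(z-\sigma)|\,dz$ yields $\lesssim s^{-1-\frac15}\|\jbrak{x}f\|_{L^2}^3\lesssim s^{-1-\frac1{20}}\|u\|_X^3$. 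No cutoffs, no singular $|\nabla\Phi|^{-1}$, no balancing of regions. If you want to salvage your writeup with minimal change, replace the near/far decomposition by this Plancherel step; the rest of your skeleton (ODE, modulus trick, fundamental theorem of calculus) then closes exactly as you describe.
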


\begin{proof}[Proof of Lemma~\ref{disp}]  We begin with the Duhamel formula for the profile $f(t)=e^{-it\Delta}u(t)$: 
\begin{align}
    f(t) = f(1)- i \int_1^t \int_0^1 e^{-is\Delta}e^{iD(\tau)\Delta}F(e^{iD(\tau)\Delta}u(s))\,d\tau\,ds,
\end{align}
where $F(z) = |z|^2 z$. Taking the Fourier transform yields the following:
\begin{multline}
    \hat{f}(t) = \hat{f}(1) - i(2\pi)^{-1}\int_1^t \int_0^1 \iint \bigl[e^{i(s+D(\tau))(\xi^2 - (\xi- \eta)^2 + (\eta - \sigma)^2 - \sigma^2)}\hfill \\ \times \hat{f}(s, \xi - \eta)  \hat{\bar{f}}(s, \eta - \sigma) \hat{f}(s,\sigma) \bigr]\,d\eta \,d\sigma \,d\tau \,ds
\end{multline}
Changing variables via $\xi-\sigma\mapsto \sigma$, we find that 
    \begin{align}
        \hat{f}(t) &= \hat{f}(1) - i (2\pi)^{-1} \int_1^t \int_0^1 \iint e^{2i(s+D(\tau))\eta\sigma} {G(s, \xi, \eta, \sigma)}\,d\sigma \,d\eta \,d\tau \,ds,
    \end{align}
    where
\[
G(s,\xi,\eta,\sigma):=\hat{f}(\xi-\eta) \hat{f}(\eta-\xi+\sigma)\hat{f}(\xi - \sigma).
\]

By Plancherel, we may write 
    \begin{align}
   	    \hat{f}(t) = \hat{f}(1) - i (2\pi)^{-1}\int_1^t \int_0^1 \iint \F_{\eta, \sigma} \left[e^{2i(s+ D(\tau))\eta\sigma}\right] \inv{\F}_{\eta, \sigma} \left[G(s, \xi, \eta, \sigma)\right] \,d\sigma \,d\eta \,d\tau \,ds
    \end{align}
Noting the identity 
\begin{align}
    \F_{\eta, \sigma} \left[e^{2i(s+D(\tau))\eta\sigma}\right] = \frac{1}{2(s+D(\tau))} e^{\frac{-i\eta\sigma}{2(s+D(\tau))}}
\end{align}
and the fact that 
\[
G(s, \xi, 0, 0) = |\hat{f}(s, \xi)|^2 \hat{f}(s, \xi),
\]
we may therefore write 
\begin{multline*}
    \hat{f}(t, \xi) = \hat{f}(1, \xi) - i(2\pi)^{-1}\int_1^t\int_0^1 \frac{1}{2(s+D(\tau))}|\hat{f}(s, \xi)|^2 \hat{f}(s, \xi) \,d\tau \,ds \\ \hfill + \int_1^t \int_0^1 \frac{1}{2(s+D(\tau))}\left[\iint \left[e^{\frac{-i\eta\sigma}{2(s+D(\tau))}}-1\right] \inv{\F}_{\eta, \sigma}[G] \,d\sigma\,d\eta\right]\,d\tau \,ds. 
\end{multline*}

In particular, this implies
\begin{align}\label{5}
i \partial_t \hat{f}(t,\xi) = \int_0^1 \frac{1}{2(t+D(\tau))}|\hat{f}(t, \xi)|^2 \hat{f}(t,\xi)\,d\tau + \int_0^1 \frac{1}{2(t+D(\tau))} \mathcal{R}(t,\tau,\xi) \,d\tau,
\end{align}
where 
\begin{align*}
    \mathcal{R}(t,\tau,\xi) = (2\pi)^{-1}\iint \left[e^{-\frac{i\eta\sigma}{2(t+D(\tau))}} - 1\right] \inv{\F}_{\eta,\sigma}[G] \,d\sigma\,d\eta.
\end{align*}

We now employ an integrating factor to remove the first term on the right-hand side of (\ref{5}).  With 
\begin{equation}\label{integrating-factor}
 \Theta(t) = \int_1^t\int_0^1 \frac{1}{2(s+D(\tau))}|\hat{f}(s, \xi)|^2 \,d\tau\,ds    
\end{equation}
and $g = e^{i\Theta(t)}\hat{f}$, we obtain 
\[
i\partial_t g = e^{i\Theta(t)}\int_0^1 \frac{1}{2(t+D(\tau))}\mathcal{R}(t, \xi)\,d\tau.   
\]

We want estimate this quantity in $L^\infty_\xi$. Using the definition of $\mathcal{R}$ from above, Cauchy--Schwarz and the bound \eqref{simple-bound}, we find that
\begin{align} \label{6}
    |\partial_t g| \lesssim \int_0^1 \iint |t+D(\tau)|^{-1-\frac15} |\eta|^{\frac15} |\sigma|^{\frac15} |\inv{\F}_{\eta,\sigma}[G](s, \xi,\eta,\sigma)| \,d\sigma\,d\eta\,d\tau
\end{align}

To proceed, we now need to invert the Fourier transform appearing in (\ref{6}).  Writing
\[
    \inv{\F}_{\eta, \sigma}[G] = \inv{\F}_\sigma \left[\inv{\F}_\eta \left[\hat{f}(\xi-\eta) \hat{f}(\eta - \xi + \sigma)\right]\hat{f}(\xi - \sigma)\right],    
\]
a direct computation leads to the estimate 
\begin{align}
    |\inv{\F}_{\eta, \sigma}[G](\eta,\sigma)| \lesssim \int |f(z-\eta)|\, |f(z)|\, |f(z-\sigma)| \,dz.
\end{align}
Thus, by the triangle inequality and Cauchy--Schwarz, and the fact that $D(\tau)\geq 0$\footnote{The fact that $D(\tau)\geq 0$ is a convenient consequence of our particular choice of parameters above.  To treat situations in which $D(\cdot)$ may take negative values, one only needs to observe that $\sup_{\tau\in[0,1]}|D(\tau)|\leq T_0$ for some $T_0>0$ and then begin the bootstrap estimate at times $t\geq 2T_0$, for example.}\label{the-footnote},
\begin{align*} 
    |(\ref{6})| & \lesssim \int_0^1 \iiint |t+D(\tau)|^{-1-\frac15} \left[|z-\eta|^{\frac15} + |z|^{\frac15}\right] \left[|z-\sigma|^{\frac15} + |z|^{\frac15}\right] \\
    &\quad\quad\quad\quad\quad \times  |f(z-\eta)||f(z)||f(z-\eta)| \,dz\,d\sigma\,d\eta \,d\tau \\
    & \lesssim t^{-1-\frac15}\|f\|_{L^1} \|\jbrak{x}^{\frac25}f\|^2_{L^1} \\
    &\lesssim t^{-1-\frac15}\|\jbrak{x}f\|^3_{L^2} \lesssim t^{-1-\frac{1}{20}}\|u(t)\|^3_{X}.
\end{align*}
The desired estimate then follows from the fundamental theorem of calculus and the triangle inequality:
\begin{align*}
    |g(t)| & = |\hat{f}(t)| \leq |\hat{f}(1)| + \int_1^t |\partial_s g(s)| \,ds \\
   & \implies \|u(t)\|_{X_D} \leq 2\eps + C\int_1^t s^{-1-\frac{1}{20}} \|u(s)\|^3_X \,ds.
\end{align*}
\end{proof}

We next estimate the energy norm. 

\begin{lemma}[Energy bound]\label{energy}
    For $u$ as above, we have the following estimate:
    \begin{align}
        \|u(t)\|_{X_E} \leq 2t^{-\frac{1}{20}}\eps + Ct^{-\frac{1}{20}} \int_1^t s^{-1+\frac{1}{20}}\|u(s)\|_X^3\,ds.
    \end{align}
\end{lemma}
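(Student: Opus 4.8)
The plan is to bound the energy norm by controlling $\|\jbrak{\partial_x}u(t)\|_{L^2}$ and $\|J(t)u(t)\|_{L^2}$ separately, both via the same mechanism. I would start from the equation \eqref{dmnls} and apply the operator $\jbrak{\partial_x}$ (respectively $J(t)$) to it, then pair against the solution and integrate in time to derive an energy identity. Since $\jbrak{\partial_x}$ commutes with the free propagator and $J(t)$ satisfies the commutation identity \eqref{J-id}, the key point is that both operators obey a Leibniz/chain rule on the cubic nonlinearity: the ordinary chain rule for $\jbrak{\partial_x}$, and the identity \eqref{lem1} for $J(t)$ (with $p=2$). The goal is to show that
\begin{equation*}
\tfrac{d}{dt}\bigl\{\|\jbrak{\partial_x}u(t)\|_{L^2}+\|J(t)u(t)\|_{L^2}\bigr\} \lesssim t^{-1}\|u(t)\|_X^3,
\end{equation*}
which upon integration and multiplication by the prefactor $t^{-1/20}$ yields the claimed bound.

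First I would treat $\|\jbrak{\partial_x}u\|_{L^2}$. Applying $\jbrak{\partial_x}$ to the Duhamel nonlinearity and using that $e^{i\cdot\Delta}$ and $e^{-iD(\tau)\Delta}$ are unitary on $L^2$, the task reduces to estimating $\|\jbrak{\partial_x}[|e^{iD(\tau)\Delta}u|^2 e^{iD(\tau)\Delta}u]\|_{L^2}$. By the chain rule this is controlled by $\|e^{iD(\tau)\Delta}u\|_{L^\infty}^2\|\jbrak{\partial_x}u\|_{L^2}$, uniformly in $\tau\in[0,1]$. Here is where the new idea flagged in the introduction enters: I cannot simply invoke the $L^\infty$-decay of $u$, because the nonlinearity involves $e^{iD(\tau)\Delta}u$ rather than $u$ itself. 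Instead, following the remark about \eqref{MDFM}, I would factor $e^{iD(\tau)\Delta}e^{it\Delta}=e^{i(t+D(\tau))\Delta}$ and apply the decomposition \eqref{MDFM} at the shifted time $t+D(\tau)$; since $\sup_{\tau}D(\tau)$ is bounded and $D(\tau)\geq 0$, we have $t+D(\tau)\sim t$, so this recovers $\|e^{iD(\tau)\Delta}u(t)\|_{L^\infty}\lesssim t^{-1/2}\|u(t)\|_X$ exactly as in Lemma~\ref{deduce-pointwise}. This gives the $L^\infty$-factor the decay $t^{-1}$ when squared.

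Next I would treat $\|J(t)u\|_{L^2}$ analogously. Using \eqref{lem1} with $p=2$, the operator $J(t)$ distributes across the nonlinearity producing terms of the form $|e^{iD(\tau)\Delta}u|^2[J(t)\,\text{-type derivative}]$; the essential structure is again a product of two $L^\infty$-controlled factors $e^{iD(\tau)\Delta}u$ and one $L^2$ factor involving $J$. Because $J(t)=e^{it\Delta}xe^{-it\Delta}$ commutes through the propagators up to the twist $e^{iD(\tau)\Delta}$, and since $J(t)$ and $e^{iD(\tau)\Delta}$ commute (both being functions of the generator of free evolution), applying $J(t)$ to the integrand reduces to controlling $\|e^{iD(\tau)\Delta}u\|_{L^\infty}^2\|J(t)u\|_{L^2}$, again with the $t^{-1}$ decay from the factorization argument. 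Summing the two contributions and bounding $\|\jbrak{\partial_x}u\|_{L^2}+\|J(t)u\|_{L^2}\lesssim t^{1/20}\|u\|_X$ from the definition \eqref{XDXE} yields $t^{-1}\cdot t^{2/20}\|u\|_X^2\cdot t^{1/20}\|u\|_X=t^{-1+3/20}\|u\|_X^3$; however, to match the stated exponent $s^{-1+1/20}$ I would use the decay only on the two $L^\infty$ factors (giving $t^{-1}$) while keeping the remaining energy factor at $t^{1/20}\|u\|_X$ and recognizing the two $L^\infty$ factors already include a $t^{1/20}\|u\|_X$ each only once, so the careful bookkeeping gives the integrand $s^{-1+1/20}\|u\|_X^3$.

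The main obstacle I anticipate is precisely this bookkeeping of the weights $t^{\pm 1/20}$ built into the definition of $X_E$, together with correctly justifying that the nonlinear terms produced by $J(t)$ via \eqref{lem1} retain the product structure (two decaying $L^\infty$ factors times one $L^2$ factor) even after the twist by $e^{iD(\tau)\Delta}$. Establishing that $e^{iD(\tau)\Delta}u$ enjoys the same $L^\infty$-decay as $u$ — via the shifted-time factorization rather than a direct appeal to pointwise decay — is the genuinely new step relative to the standard NLS argument, and I would make sure that the boundedness and nonnegativity of $D(\tau)$ are used exactly as in footnote~\ref{the-footnote} to guarantee $t+D(\tau)\sim t$ uniformly on $[0,1]$.
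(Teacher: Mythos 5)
Your overall plan coincides with the paper's proof: estimate the two components of $X_E$ via Duhamel, use the ordinary chain rule for $\jbrak{\partial_x}$ and \eqref{lem1} for $J$, and obtain the $L^\infty$ decay of $e^{iD(\tau)\Delta}u(s)$ not from pointwise decay of $u$ itself but from the factorization \eqref{MDFM} applied at the shifted time $s+D(\tau)$, using $D(\tau)\geq 0$ so that $s+D(\tau)\geq s$; this is exactly the paper's key estimate \eqref{new-estimate}. However, there is one genuine error in your justification of the $J$-part: you assert that $J(t)$ and $e^{iD(\tau)\Delta}$ commute, ``both being functions of the generator of free evolution.'' This is false. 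The operator $J(t)=x+2it\partial_x$ involves multiplication by $x$ and is not a function of $\partial_x$; it does not commute with any nontrivial free propagator. The correct statement is the conjugation identity $e^{iD\Delta}J(s)e^{-iD\Delta}=J(s+D)$: passing $J$ through a propagator \emph{shifts its time parameter}. This is precisely what the paper tracks. Applying $J(t)$ to the Duhamel integrand $e^{i(t-s)\Delta}e^{-iD(\tau)\Delta}F(e^{iD(\tau)\Delta}u(s))$ produces $e^{i(t-s)\Delta}e^{-iD(\tau)\Delta}\bigl[J(s+D(\tau))F(e^{iD(\tau)\Delta}u(s))\bigr]$; one must then apply \eqref{lem1} with the operator $J(s+D(\tau))$ (not $J(t)$ or $J(s)$), and the resulting $L^2$ factor is handled by the identity $J(s+D(\tau))e^{iD(\tau)\Delta}u(s)=e^{iD(\tau)\Delta}J(s)u(s)$, whose norm equals $\|J(s)u(s)\|_{L^2}$ by unitarity. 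Your final estimate is correct only because these two time shifts cancel — a cancellation that your claimed (false) commutation cannot justify and which must be exhibited explicitly, as in \eqref{energy-pause}.

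A smaller point: your first weight count, $t^{-1}\cdot t^{2/20}\|u\|_X^2\cdot t^{1/20}\|u\|_X=t^{-1+3/20}\|u\|_X^3$, is simply wrong and should be discarded rather than patched. The shifted-time estimate \eqref{new-estimate} gives $\|e^{iD(\tau)\Delta}u(s)\|_{L^\infty}\lesssim s^{-1/2}\|u(s)\|_X$ with \emph{no} weight loss, because the $t^{-1/20}$ factor is already built into the definition of $X_E$ in \eqref{XDXE}; the only positive power of $s$ enters through $\|J(s)u(s)\|_{L^2}\leq s^{1/20}\|u(s)\|_{X_E}$. The integrand is therefore $(s^{-1/2}\|u\|_X)^2\cdot s^{1/20}\|u\|_X=s^{-1+1/20}\|u\|_X^3$ directly, and multiplying the integrated bound by the prefactor $t^{-1/20}$ yields the lemma.
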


\begin{proof}[Proof of Lemma~\ref{energy}]
    The starting point is the Duhamel formula
\begin{align*}
    u(t) = e^{i(t-1)\Delta} u(1) - i \int_1^t \int_0^1 e^{i(t-s)\Delta}e^{-iD(\tau)\Delta}F(e^{iD(\tau)\Delta}u(s))\,d\tau\,ds
\end{align*}
with $F(z) = |z|^2 z$.

We first estimate the weighted component of the $X_E$-norm.  Using \eqref{J-id}, we deduce 
\begin{align*}
    J(t)u(t) &= e^{it\Delta}xf(1) \\
    &\quad - i\int_0^t \int_0^1 e^{i(t-s)\Delta}e^{-iD(\tau)\Delta} \left[J(s+D(\tau))F(e^{iD(\tau)\Delta}u(s))\right]\,d\tau \,ds.
\end{align*}
Thus, using \eqref{lem1}, \eqref{small-data-t1}, \eqref{J-id}, Sobolev embedding, and the unitarity of $e^{i\cdot\Delta}$, we obtain
    \begin{align}
        \|{J(t)u(t)}\|_{L^2} &\leq 2\eps + C\int_1^t\int_0^1\bigl\| e^{iD(\tau)\Delta}u(s)\|_{L^\infty}^2 \bigl\| J(s+D(\tau))e^{iD(\tau)\Delta}u(s)\bigr\|_{L^2}\,d\tau\,ds \nonumber \\
        &\leq 2\eps + C\int_1^t \int_0^1 \bigl\| e^{iD(\tau) \Delta}u(s)\bigr\|^2_{L^\infty_x} \bigl\| e^{iD(\tau)\Delta} J(s)u(s)\bigr\|_{L^2}\,d\tau\,ds \nonumber \\
        &\leq 2\eps + C\int_1^t  s^{\frac{1}{20}} \biggl[\int_0^1 \bigl\| e^{i(s+D(\tau))\Delta}f(s)\bigr\|^2_{L^\infty_x}\,d\tau\biggr]\|u(s)\|_{X_E}\,ds.\label{energy-pause}
    \end{align}

To proceed, we use the factorization identity \eqref{MDFM} and estimate as we did in the proof of Lemma~\ref{deduce-pointwise}.  This yields
\begin{equation}\label{new-estimate}
\begin{aligned}
    \| &e^{i(s+D(\tau))\Delta}f(s) \|_{\lspace{\infty}{x}} \\
    &\lesssim |s+D(\tau)|^{-\frac12}\bigl\{ \| \hat{f}\|_{\lspace{\infty}{}} + \|\F[\mathcal{M}(s+D(\tau))-1]f \|_{\lspace{\infty}{}}\bigr\} \\
    &\lesssim |s+D(\tau)|^{-\frac12}\bigl\{\|\hat{f}\|_{\lspace{\infty}{}} + \|[\mathcal{M}(s+D(\tau))-1]f \|_{\lspace{1}{}}\bigr\}\\
    &\lesssim|s+D(\tau)|^{-\frac12}\bigl\{\|\hat{f}\|_{\lspace{\infty}{}} +|s+D(\tau)|^{-\frac15}\||x|^{\frac25}f\|_{\lspace{1}{}}\bigr\}\\
    &\lesssim |s+D(\tau)|^{-\frac12}\|\hat{f}\|_{\lspace{\infty}{}} + |s+D(\tau)|^{-\frac12-\frac15}\|\jbrak{x} f\|_{\lspace{2}{}} \\
    &\lesssim \bigl\{|s+D(\tau)|^{-\frac12}+|s+D(\tau)|^{-\frac12-\frac{3}{20}}\bigr\}\|u(s)\|_X \lesssim |s|^{-\frac12} \|u(s)\|_X,
\end{aligned}
\end{equation}
where we have again used the fact that $D(\tau)\geq 0$ (cf. the footnote on page \pageref{the-footnote}). Inserting this into \eqref{energy-pause}, we obtain the desired estimate for the weighted component of the $X_E$-norm. 

For the $H^1$ component of the $X_E$-norm, we estimate in much the same way, using the chain rule directly in place of \eqref{lem1}.
\end{proof}

With Lemma~\ref{disp} and Lemma~\ref{energy} in place, we readily obtain the estimate appearing in Proposition~\ref{P:bootstrap}.  Using a standard continuity argument, the well-posedness theory discussed in Section~\ref{S:LWP}, and Lemma~\ref{deduce-pointwise}, we deduce the the first part of Theorem~\ref{thm1}.  In particular, we have the following:

\begin{corollary}\label{corollary-first-part} For $\|u_0\|_{H^{1,1}}=\eps$ sufficiently small, there exists a unique, forward-global solution $u\in C_t H_x^{1,1}([0,\infty)\times\R)$ to \eqref{dmnls} with $u|_{t=0}=u_0$ obeying
\[
\|u(t)\|_X \lesssim \eps \qtq{for all} t\geq 1.
\]
In particular,
\[
\|u(t)\|_{L^\infty} \lesssim \eps(1+|t|)^{-\frac12}\qtq{for all}t\geq 0. 
\]
\end{corollary}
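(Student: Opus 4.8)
The plan is to assemble the two preceding lemmas into the cubic bootstrap inequality of Proposition~\ref{P:bootstrap} and then run a continuity argument. First I would combine Lemma~\ref{disp} and Lemma~\ref{energy}, the key observation being that the time integrals appearing there are precisely tuned to the weight $t^{-1/20}$ in the definition of $X_E$. Since the quantity $\|u(\cdot)\|_X$ is a supremum over the whole interval $[1,T]$ and hence independent of the integration variable, I can pull it out of each integral. For the dispersive part, $\int_1^t s^{-1-\frac{1}{20}}\,ds$ is uniformly bounded in $t$, so Lemma~\ref{disp} gives $\|u(t)\|_{X_D}\leq 2\eps + C\|u(t)\|_X^3$; for the energy part, $t^{-\frac{1}{20}}\int_1^t s^{-1+\frac{1}{20}}\,ds\lesssim 1$ and $2t^{-\frac{1}{20}}\eps\leq 2\eps$, so Lemma~\ref{energy} gives $\|u(t)\|_{X_E}\leq 2\eps + C\|u(t)\|_X^3$. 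Adding these and taking the supremum over $s\in[1,T]$ reproduces the inequality $\|u(t)\|_X\leq 8\eps + C\|u(t)\|_X^3$ of Proposition~\ref{P:bootstrap}.

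With the bootstrap inequality in hand, I would run the standard continuity argument. The map $T\mapsto \|u\|_{X([1,T])}$ is non-decreasing and continuous, since the solution lies in $C_t H^{1,1}$ on its maximal interval $[1,T_*)$, and at $T=1$ it is controlled by $\|u(1)\|_{H^{1,1}}\lesssim\eps$ via \eqref{small-data-t1}, Sobolev embedding, and the identity $\|J(1)u(1)\|_{L^2}=\|xf(1)\|_{L^2}$. Feeding the a~priori hypothesis $\|u(t)\|_X\leq 16\eps$ into the cubic inequality yields $\|u(t)\|_X\leq 8\eps + C(16\eps)^3\leq 9\eps$ once $\eps$ is small enough that $C(16)^3\eps^2\leq 1$. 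This strict improvement, combined with continuity, shows that the set of $T$ for which $\|u\|_{X([1,T])}\leq 16\eps$ is nonempty, open, and closed in $[1,T_*)$, hence all of $[1,T_*)$; thus $\|u(t)\|_X\lesssim\eps$ uniformly on the interval of existence.

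To upgrade this to forward-global existence, I would invoke the blowup alternative from Section~\ref{S:LWP}. The uniform control of the energy norm gives $\|\jbrak{\partial_x}u(t)\|_{L^2}\leq t^{\frac{1}{20}}\|u(t)\|_{X_E}\lesssim\eps\,t^{\frac{1}{20}}$, so the $H^1$-norm grows at most polynomially and remains finite at every finite time; hence $\lim_{t\to T_*}\|u(t)\|_{H^1}=\infty$ is impossible for $T_*<\infty$, forcing $T_*=\infty$. For the pointwise decay, I would apply Lemma~\ref{deduce-pointwise} together with the uniform bound to get $\|u(t)\|_{L^\infty}\lesssim t^{-\frac12}\|u(t)\|_X\lesssim\eps\,t^{-\frac12}$ for $t\geq 1$, and handle $t\in[0,1]$ directly by Sobolev embedding and \eqref{small-data-t1}, where $\|u(t)\|_{L^\infty}\lesssim\eps\sim\eps(1+|t|)^{-\frac12}$; combining the two ranges gives the stated estimate for all $t\geq 0$. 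Uniqueness and the $C_tH^{1,1}$ regularity follow from the local well-posedness theory of Section~\ref{S:LWP}.

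Because the genuinely hard estimates already reside in Lemmas~\ref{disp} and~\ref{energy}, this corollary is largely a matter of bookkeeping, and I do not expect a serious obstacle. The one point demanding care is the exact balancing of the two time integrals against the $t^{-1/20}$ weight in $X_E$: this is what forces the specific exponent $\tfrac{1}{20}$ and is what makes the cubic bootstrap inequality (rather than a Gr\"onwall-type growth) available. The secondary point is to close the global-existence step through the blowup alternative via the polynomial $H^1$-bound, rather than attempting a direct uniform-in-time $H^1$ estimate, which the $X_E$-norm does not provide.
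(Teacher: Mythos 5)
Your proposal is correct and follows essentially the same route as the paper: the paper's own (very terse) proof is exactly to sum Lemmas~\ref{disp} and~\ref{energy} against the $t^{-1/20}$ weight to recover Proposition~\ref{P:bootstrap}, run the standard continuity argument from the data at $t=1$, invoke the blowup alternative of Section~\ref{S:LWP} via the polynomial $H^1$-bound $\|\jbrak{\partial_x}u(t)\|_{L^2}\lesssim \eps t^{1/20}$ to get forward-global existence, and conclude the decay from Lemma~\ref{deduce-pointwise} for $t\geq 1$ and Sobolev embedding with \eqref{small-data-t1} for $t\in[0,1]$. Your write-up simply supplies the bookkeeping the paper leaves implicit, and it does so correctly.
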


\subsection{Asymptotic behavior}\label{S:scattering} We now turn to the second part of Theorem~\ref{thm1}, namely, the asymptotic behavior of small solutions to \eqref{dmnls}.

\begin{proof}[Proof of \eqref{u-asymptotic}] To begin, we return to the setting of the proof of Lemma~\ref{disp}, this time with the bounds provided by Corollary~\ref{corollary-first-part} in hand.  In particular, recalling
\[
g(t) = e^{i\Theta(t)}\hat f(t),\qtq{with} \Theta(t) = \int_1^t\int_0^1 \frac{1}{2(s+D(\tau))}|\hat{f}(s, \xi)|^2 \,d\tau\,ds, 
\] 
the estimate of \eqref{6} now yields the bound 
\[
\|\partial_t g\|_{L_\xi^\infty} \lesssim t^{-1-\frac1{20}}\eps^3. 
\]
It follows that 
\[
\|g(t)-W_0\|_{L^\infty} \lesssim \eps^3 t^{-\frac{1}{20}}
\]
for some $W_0\in L^\infty$.  In particular, we have $|\hat f(t)|\to |W_0|$ in $L^\infty$, with the same rate of convergence.  

We next observe that
\[
\biggl| \int_0^1 \frac{1}{2(s+D(\tau))}\,d\tau - \frac{1}{2s} \biggr| \lesssim s^{-2}
\]
for all $s\geq 1$.  Using this, we deduce that 
\[
\Theta(t) = \tfrac12 |W_0|^2\log t + \Phi(t),
\]
where $\Phi(t)$ converges to a real-valued limit $\Phi_\infty$ in $L^\infty$ (with a rate of $t^{-\frac1{10}}$).  Thus, setting $W=e^{-i\Phi_\infty}W_0$, we may obtain 
\begin{equation}\label{f-hat-asymptotic}
\hat f(t)  = e^{-i\frac12|W_0|^2\log t} e^{-i\Phi_\infty} W_0 + \mathcal{O}(t^{-\frac1{20}}) = e^{-\frac12|W|^2\log t}W + \mathcal{O}(t^{-\frac{1}{20}})
\end{equation}
in $L^\infty$ as $t\to\infty$.  Finally, using \eqref{MDFM} and estimating as we did for Lemma~\ref{deduce-pointwise}, we obtain 
\[
u(t) = e^{it\Delta}f(t) = \mathcal{M}(t)\mathcal{D}(t)\hat f(t) + \mathcal{O}(t^{-\frac1{20}}). 
\]
Inserting the asymptotic behavior for $\hat f$ obtained in \eqref{f-hat-asymptotic}, we obtain the desired asymptotic behavior for $u(t)$.\end{proof}
%%%%%%%%
%%%%%%%%


\begin{thebibliography}{100}
    \bibitem{Agr} G.P. Agrawal, \emph{Nonlinear Fiber Optics}. Second Edition (Optics and Photonics), Academic Press, San Diego, 1995. 
    
    \bibitem{AntSauSpa} P. Antonelli, J.-C. Saut, C. Sparber, \emph{Well-posedness and averaging of NLS with time-periodic dispersion management.} Adv. Differential Equations \textbf{18}, (2013) no. 1-2, 49--€"68.
    
    \bibitem{Caz} T. Cazenave, \emph{Semilinear Schr\"odinger equations.} Courant Lecture Notes in Mathematics, \textbf{10} New York University, Courant Institute of Mathematical Sciences, New York; American Mathematical Society, Providence, RI, 2003. xiv+323pp.
    
    \bibitem{CHL} M. R. Choi, D. Hundertmark, and Y. R. Lee, \emph{Well-posedness of dispersion managed nonlinear Schr\"odinger equations.} Preprint {\tt arXiv:2003.09076.}
    
    \bibitem{DeiftZhou} P. Deift and X. Zhou, \emph{Long-time asymptotics for solutions of the NLS equation with initial data in a weighted Sobolev space. Dedicated to the memory of J\"urgen K. Moser.} Comm. Pure Appl. Math. \textbf{56} (2003), no. 8, 1029--1077. 
    
    \bibitem{EHL}M. Burak Erdo\u{g}an, D. Hundertmark and Y.-R. Lee, \emph{Exponential decay of dispersion managed solitons for vanishing average dispersion.} Math. Res. Lett. \textbf{18} (2011), no. 1, 11--24. 
    
    \bibitem{GT1} I. Gabitov and S.K. Turitsyn, \emph{Averaged pulse dynamics in a cascaded transmission system with passive dispersion compensation}. Opt. Lett. \textbf{21}, (1996), 327--329.
    
    \bibitem{GT2} I. Gabitov and S.K. Turitsyn, 
    \emph{Breathing solitons in optical fiber links}. JETP Lett. \textbf{63} (1996) 861. 
    
    \bibitem{HN} N. Hayashi and P. Naumkin, \emph{Asymptotics for large time of solutions to the nonlinear Schr\"odinger and Hartree equations}. Amer. J. Math. \textbf{120} (1998), no. 2, 369--389.
    
    \bibitem{HL} D. Hundertmark and Y.-R. Lee, \emph{Decay estimates and smoothness for solutions of the dispersion managed non-linear Schr\"odinger equation}. Commun. Math. Phys. \textbf{286} (2009), 851--€"873. 
    
    \bibitem{HL2} D. Hundertmark and Y.-R. Lee, \emph{On non-local variational problems with lack of compactness related to non-linear optics}, J. Nonlinear Sci. \textbf{22} (2012), no. 1, 1--38.
    
    \bibitem{IT} M. Ifrim and D. Tataru, \emph{Global bounds for the cubic nonlinear Schr\"odinger equation (NLS) in one space dimension.} Nonlinearity \textbf{28} (2015), no. 8, 2661--2675.
    
    \bibitem{KP} J. Kato and F. Pusateri, \emph{A new proof of long-range scattering for critical nonlinear Schr\"odinger equations.} Differential Integral Equations \textbf{24} (2011), no. 9-10, 923--940.
    \bibitem{Kur} C. Kurtzke, \emph{Suppression of fiber nonlinearities by appropriate dispersion management.} IEEE Phot. Tech. Lett., \textbf{5} (1993), 1250--1253.
    
    \bibitem{LS} H. Lindblad and A. Soffer, \emph{Scattering and small data completeness for the critical nonlinear Schr\"odinger equation.} Nonlinearity \textbf{19} (2006), no. 2, 345--353.
    
    \bibitem{Murphy} J. Murphy, \emph{A review of modified scattering for the 1d cubic NLS.} To appear in RIMS Kokyuroku Bessatsu,
    
    \bibitem{PZ} D. Pelinovsky and V. Zharnitsky, \emph{Averaging of dispersion managed solitons: existence and stability.} SIAM J. Appl. Math. \textbf{63} (2003), 745--776.
    
    \bibitem{ZGJT} V. Zharnitsky, E. Grenier, K.R.T. Jones, and S.K. Turitsyn, \emph{Stabilizing effects of dispersion management.} Phys. D, \textbf{152} (2001), 794--817
    
\end{thebibliography}
\end{document}